\documentclass[envcountsect,envcountsame,runningheads]{llncs}
\pdfoutput=1

\usepackage[utf8]{inputenc}
\usepackage[english]{babel}
\usepackage{amssymb,amsmath,mathtools,proof}
\usepackage[protrusion=true,expansion=true]{microtype}

\newcommand{\aaa}{\mathfrak{a}}
\newcommand{\mmm}{\mathfrak{m}}
\newcommand{\nnn}{\mathfrak{n}}
\newcommand{\ppp}{\mathfrak{p}}
\newcommand{\NN}{\mathbb{N}}
\newcommand{\ZZ}{\mathbb{Z}}
\newcommand{\defeq}{\vcentcolon=}
\renewcommand{\_}{\mathpunct{.}\,}

\begin{document}

\title{Maximal ideals in countable rings, constructively}
\author{Ingo Blechschmidt\inst{1} \and Peter Schuster\inst{2}}
\institute{Universität Augsburg, Universitätsstr. 14, 86159 Augsburg, Germany
\email{ingo.blechschmidt@math.uni-augsburg.de}
\and Università di Verona, Strada le Grazie 15, 37134 Verona, Italy
\email{petermichael.schuster@univr.it}}

\maketitle

\begin{abstract}
  The existence of a maximal ideal in a general nontrivial commutative
  ring is tied together with the axiom of choice.
  Following Berardi, Valentini and thus Krivine but using the relative interpretation of
  negation (that is, as ``implies~$0=1$'') we show, in constructive set theory with
  minimal logic, how for countable rings one can do without any kind of choice
  and without the usual decidability assumption that the ring is strongly
  discrete (membership in finitely generated ideals is decidable).
  By a functional
  recursive definition we obtain a maximal ideal in the sense that the quotient ring is a residue field
  (every noninvertible element is zero), and with strong discreteness
  even a geometric field (every element is either invertible or
  else zero). Krull's lemma for the related notion of prime ideal follows by
  passing to rings of fractions. All this equally applies to rings indexed by any well-founded set, and can be carried over to Heyting
  arithmetic with minimal logic.
  We further show how a metatheorem of Joyal and Tierney can be used to expand our
  treatment to arbitrary rings. Along the way we do a case study for proofs in
  algebra with minimal logic. An Agda formalization is available at an
  accompanying repository.\footnote{\url{https:/$\!$/github.com/iblech/constructive-maximal-ideals/}}
\end{abstract}

\noindent
Let~$A$ be a commutative ring with unit. The standard way of constructing a
maximal ideal of~$A$ is to apply Zorn's lemma to the set of proper ideals
of~$A$; but this method is less an actual construction and more an appeal to
the transfinite.

If~$A$ is countable with enumeration~$x_0,x_1,\ldots$, we can hope to provide a
more explicit construction by successively adding generators to the zero ideal,
skipping those which would render it improper:
\begin{align*}
  \mmm_0 &= \{ 0 \} &
  \mmm_{n+1} &= \begin{cases}
    \mmm_n + (x_n), & \text{if $1 \not\in \mmm_n + (x_n)$}, \\
    \mmm_n, & \text{else.}
  \end{cases}
\end{align*}
A maximal ideal is then obtained in the limit as the union of the intermediate
stages~$\mmm_n$. For instance, Krull in his 1929 Annals contribution~\cite[Hilfs\-satz]{krull:ohne} and books on constructive
algebra~\cite[Lemma~VI.3.2]{mines-richman-ruitenburg:constructive-algebra},~\cite[comment after Theorem~VII.5.2]{lombardi-quitte:constructive-algebra} proceed in this fashion.
A similar
construction concocts Henkin models for the purpose of proving
Gödel's completeness theorem for countable languages, successively adding
formulas which do not render the current set
inconsistent~\cite[Satz~I.56]{tarski:fundamental},~\cite[Lemma~1.5.7]{dalen:logic},~\cite[Lemma~III.5.4]{simpson:subsystems},~\cite[Lemma~2.1]{ishihara-khoussainov-nerode:decidable-kripke-models}.

This procedure avoids any form of choice by virtue of being a
functional recursive definition, but still requires some form of omniscience in
order to carry out the case distinction.
In the present text we study a variant of this construction, due to Berardi and
Valentini~\cite{berardi-valentini:krivine}, which
avoids any non-constructive principles and decidability assumptions, similar to
a construction which has been studied by
Krivine~\cite[p.~410]{krivine:completeness} and later Herbelin and
Ilik~\cite[p.~11]{herbelin-ilik:henkin} in the
context of Gödel's completeness theorem. In this generality, the resulting
maximal ideal has an elusive quality to it, but useful properties can still be
extracted; and not only do we recover the original construction under certain
decidability assumptions, we can also exploit a relativity phenomenon of
mathematical logic in order to drop, with some caveats, the assumption that~$A$ is countable.

\paragraph{Conventions.} Throughout this note, we fix a ring~$A$, and work in a constructive metatheory.
In the spirit of Lombardi and Quitté~\cite{lombardi-quitte:constructive-algebra}, we employ \emph{minimal
logic}~\cite{johansson:minimal}, where by ``not~$\varphi$'' we mean~``$\varphi \Rightarrow 1 =_A 0$'', and do \emph{not} assume any form of the axiom of choice. Consequently,
by~``$x \not\in M$'' we mean~$x \in M \Rightarrow 1 =_A 0$, and a subset~$M
\subseteq A$ is \emph{detachable} if and only if for all~$x \in A$, either~$x \in M$
or~$x \not\in M$. For general background on constructive mathematics, we refer
to~\cite{bauer:five-stages,bauer:int-mathematics,sep:constructive-mathematics}.

For an arbitrary subset~$M \subseteq A$, not necessarily
detachable, the ideal~$(M)$ generated by~$M$ is given by~$\bigl\{ \sum_{i=1}^n
a_i v_i \,\Big|\, n \geq 0, a_1,\ldots,a_n \in A, v_1,\ldots,v_n \in M \bigr\}$.
Notice that, for every element~$v \in (M)$, either $v = 0$ or $M$ is inhabited, depending on whether
$n=0$ or $n>0$ in $\sum_{i=1}^n a_i v_i$. This can also be seen from the alternative inductive
generation of $(M)$ by the following rules:\par
{\vspace*{-0.8em}\small\[
\infer{v \in (M)}{v=0}
\qquad\qquad
\infer{v \in (M)}{v \in M}
\qquad\qquad
\infer{v+w \in (M)}{v \in (M)\quad w \in (M)}
\qquad\qquad
\infer{av \in (M)}{\quad a\in A \quad v \in (M)}
\]}%
\noindent Here we adhere to the paradigm of generalized inductive definitions
\cite{aczel-rathjen:notes,aczel-rathjen:cstdraft,rathjen:genind}.

\section{A construction}
\label{sect:constr}

We assume that the ring~$A$ is countable, with~$x_0, x_1, \ldots$ an
enumeration of the elements of~$A$. We do \emph{not} assume that~$A$ is
discrete (that is, that~$x = y$ or~$x \neq y$ for all elements of~$A$) or that
it is strongly discrete (that is, that finitely generated
ideals of~$A$ are detachable). Up to Corollary~\ref{cor:is-prime-max}(a) below
we follow~\cite{berardi-valentini:krivine}.

We study the following recursive construction of ideals~$\mmm_0, \mmm_1,
\ldots$ of~$A$:
\begin{align*}
  \mmm_0 &\defeq \{0\} &
  \mmm_{n+1} &\defeq \mmm_n + (\{ x_n \,|\, 1 \not\in \mmm_n + (x_n) \}).
\end{align*}
Finally, we set~$\mmm \defeq \bigcup_n \mmm_n$. The construction
of~$\mmm_{n+1}$ from~$\mmm_n$ is uniquely specified, requiring no choices of
any form.

The set~$M_n \defeq \{ x_n \,|\, 1 \not\in \mmm_n + (x_n) \}$ occurring in this
construction contains the element~$x_n$ if and only if~$1 \not\in \mmm_n +
(x_n)$; it is obtained from the singleton set~$\{x_n\}$ by bounded separation.
This set~$M_n$ is inhabited precisely if~$1 \not\in \mmm_n + (x_n)$, in which case~$\mmm_{n+1} = \mmm_n + (x_n)$.
However, in the generality we work in, we cannot assume that~$M_n$ is empty or
inhabited.

We can avoid the case distinction by the flexibility of nondetachable
subsets, rendering it somewhat curious that---despite the conveyed flavor of a
conjuring trick---the construction can still be used to obtain concrete
positive results.

The ideal~$(M_n)$ is given by
$(M_n) = \{ a x_n \,|\, (a = 0) \vee (1 \not\in \mmm_n + (x_n)) \}$.

\begin{lemma}\label{lemma:omnibus}
\begin{enumerate}
\item[\textnormal{(a)}] The subset~$\mmm$ is an ideal.
\item[\textnormal{(b)}] The ideal~$\mmm$ is \emph{proper} in the sense that~$1 \not\in \mmm$.
\item[\textnormal{(c)}] For every number~$n \in \NN$, the following are equivalent: \\
(1)~$x_n \in \mmm_{n+1}$.\quad
(2)~$x_n \in \mmm$.\quad
(3)~$1 \not\in \mmm + (x_n)$.\quad
(4)~$1 \not\in \mmm_n + (x_n)$.
\end{enumerate}
\end{lemma}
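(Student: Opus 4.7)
The plan is mostly bookkeeping; the only real subtlety is in part~(b), where minimal logic forbids the usual case split on whether $M_n$ is empty or inhabited, so we must instead exploit the displayed description of~$(M_n)$ recorded just before the lemma.

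For~(a), I would first verify by induction that $(\mmm_n)_n$ is an ascending chain of ideals: the base case is trivial, and $\mmm_{n+1} = \mmm_n + (M_n)$ is a sum of two ideals and visibly contains~$\mmm_n$. The union~$\mmm$ is then an ideal by the standard argument for directed unions: $0 \in \mmm_0 \subseteq \mmm$; given $v \in \mmm_n$ and $w \in \mmm_m$ we have $v + w \in \mmm_{\max(n,m)} \subseteq \mmm$; and $a v \in \mmm_n \subseteq \mmm$ for every $a \in A$.

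For~(b), I would prove $1 \not\in \mmm_n$ by induction on~$n$. The base case reads $1 \in \{0\} \Rightarrow 1 = 0$, which is immediate. For the step, assume $1 \in \mmm_{n+1}$, so $1 = u + v$ with $u \in \mmm_n$ and $v \in (M_n)$. Using the explicit description $(M_n) = \{a x_n \mid (a = 0) \vee (1 \not\in \mmm_n + (x_n))\}$, write $v = a x_n$ and case-split on the disjunction: if $a = 0$ then $1 = u \in \mmm_n$ and the induction hypothesis gives $1 = 0$; if $1 \not\in \mmm_n + (x_n)$ then the equation $1 = u + a x_n$ exhibits~$1$ as an element of $\mmm_n + (x_n)$, so again $1 = 0$. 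Since any purported membership $1 \in \mmm$ would land in some~$\mmm_n$, this yields $1 \not\in \mmm$. I expect this to be the main obstacle: one must be disciplined that the case split is the elimination rule for an already-proved disjunction, not an appeal to decidability (which is unavailable here).

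For~(c), I would run the cycle (1)$\Rightarrow$(2)$\Rightarrow$(3)$\Rightarrow$(4)$\Rightarrow$(1). The implication (1)$\Rightarrow$(2) is the inclusion $\mmm_{n+1} \subseteq \mmm$. For (2)$\Rightarrow$(3), from $x_n \in \mmm$ and $1 \in \mmm + (x_n)$ one deduces $1 \in \mmm$, and part~(b) supplies $1 = 0$. The step (3)$\Rightarrow$(4) follows from $\mmm_n + (x_n) \subseteq \mmm + (x_n)$. The decisive step is (4)$\Rightarrow$(1): the hypothesis $1 \not\in \mmm_n + (x_n)$ is exactly what the separation defining~$M_n$ demands to place~$x_n$ into~$M_n$, whence $x_n \in (M_n) \subseteq \mmm_{n+1}$.
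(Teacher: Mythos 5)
Your proposal is correct and follows essentially the same route as the paper: part~(a) via directed unions, part~(b) by induction on~$n$ using the explicit description of~$(M_n)$ and eliminating the internally supplied disjunction~$(a = 0) \vee (1 \not\in \mmm_n + (x_n))$, and part~(c) via the same four implications (merely arranged as a cycle rather than a chain plus one closing arrow). Your remark that the case split in~(b) is disjunction elimination on data already present in the ideal, rather than an appeal to decidability, is exactly the right thing to be careful about here.
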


\begin{proof}\begin{enumerate}
\item[(a)] Directed unions of ideals are ideals.
\item[(b)] Assume~$1 \in \mmm$. Then~$1 \in \mmm_n$ for some number~$n \geq 0$. We
verify~$1 = 0$ by induction over~$n$.
If~$n = 0$, then~$1 \in \mmm_0 = \{0\}$. Hence~$1 = 0$.

If~$n > 0$, then~$1 = y + a x_{n-1}$ for some elements~$a,y \in A$ such that~$y
\in \mmm_{n-1}$ and such that~$a = 0$ or~$1 \not\in \mmm_{n-1} + (x_{n-1})$.
In the first case, we have~$1 = y \in \mmm_{n-1}$, hence~$1 = 0$ by the induction
hypothesis. In the second case we have~$1 = 0$ by modus ponens applied to the
implication~$1 \not\in \mmm_{n-1} + (x_{n-1})$ and the fact~$1 \in \mmm_{n-1} +
(x_{n-1})$ (which follows directly from the equation~$1 = y + a x_{n-1}$).
\item[(c)] It is clear that~$(3) \Rightarrow (4) \Rightarrow (1) \Rightarrow
(2)$. It remains to show that~$(2) \Rightarrow (3)$.

Assume~$x_n \in \mmm$. In order to verify~$1 \not\in \mmm + (x_n)$,
assume~$1 \in \mmm + (x_n)$. Since~$\mmm + (x_n) \subseteq \mmm$,
we have~$1 \in \mmm$. Hence~$1 = 0$ by properness of~$\mmm$.
\end{enumerate}\end{proof}

\begin{corollary}\label{cor:is-prime-max}
\begin{enumerate}
\item[\textnormal{(a)}] The ideal~$\mmm$ is \emph{maximal} in the sense that it is proper and that for
all elements~$x \in A$, if $1 \not\in \mmm + (x)$, then~$x \in \mmm$.
\item[\textnormal{(b)}] The ideal~$\mmm$ is \emph{prime} in the
sense that it is proper and that for
all elements~$x,y \in A$, if~$xy \in \mmm$ and~$x \not\in \mmm$, then~$y \in
\mmm$.
\item[\textnormal{(c)}] The ideal~$\mmm$ is \emph{radical} in the sense that for every~$k \geq 0$, if~$x^k \in
\mmm$, then~$x \in \mmm$.
\end{enumerate}
\end{corollary}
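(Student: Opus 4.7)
The plan is to derive (a) directly from Lemma~\ref{lemma:omnibus}(c), and then use (a) as a stepping stone to (b) and (c) via short algebraic arguments that stay inside minimal logic.

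For (a), properness is already Lemma~\ref{lemma:omnibus}(b). To verify the remaining condition, given~$x \in A$ with~$1 \not\in \mmm + (x)$, I would invoke the assumption that~$A$ is countable to pick some~$n$ with~$x = x_n$; then~$1 \not\in \mmm + (x_n)$, and clause~$(3) \Rightarrow (2)$ of Lemma~\ref{lemma:omnibus}(c) immediately yields~$x_n \in \mmm$, that is~$x \in \mmm$.

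For (b), assume~$xy \in \mmm$ and~$x \not\in \mmm$. By (a) it suffices to establish~$1 \not\in \mmm + (y)$. So assume~$1 \in \mmm + (y)$; unfolding this gives~$1 = m + ay$ with~$m \in \mmm$ and~$a \in A$. Multiplying through by~$x$ yields~$x = mx + a \cdot xy$, and both summands lie in~$\mmm$ (using~$m \in \mmm$ and~$xy \in \mmm$ together with closure of~$\mmm$ under multiplication by~$A$). Thus~$x \in \mmm$, and the hypothesis~$x \not\in \mmm$ delivers~$1 = 0$.

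For (c), I would distinguish~$k = 0$ from~$k \geq 1$. In the former case,~$x^0 = 1 \in \mmm$ forces~$1 = 0$ by properness, whence~$x = x \cdot 1 = x \cdot 0 = 0 \in \mmm_0 \subseteq \mmm$. In the latter case, using (a) again, it suffices to show~$1 \not\in \mmm + (x)$: assuming~$1 = m + ax$ with~$m \in \mmm$, I would expand~$1 = 1^k = (m+ax)^k$ by the binomial theorem, observe that every summand other than~$(ax)^k$ carries~$m$ as a factor and hence lies in~$\mmm$, and note that the remaining term~$a^k x^k$ lies in~$\mmm$ because~$x^k \in \mmm$; so~$1 \in \mmm$, giving~$1 = 0$ by properness.

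The only real subtlety, and the recurring thing to watch, is that no step performs a case split on whether a given element lies in an ideal---each occurrence of~``$\not\in$'' is consistently unfolded to its definition as ``implies~$1 = 0$'', so that the entire argument remains inside minimal logic.
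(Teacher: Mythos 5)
Your proof is correct and follows the paper's own route: (a) is immediate from Lemma~\ref{lemma:omnibus}(c), (b) multiplies an alleged witness of~$1 \in \mmm + (y)$ through by~$x$, and (c) reduces to (a) by expanding~$1 = 1^k$. The only cosmetic difference is in (c): the paper avoids your case split between~$k = 0$ and~$k \geq 1$ by phrasing the expansion ideal-theoretically as~$(\mmm + (x))^k \subseteq \mmm + (x^k) \subseteq \mmm$, which is valid uniformly for all~$k \geq 0$ since the empty product of ideals is the unit ideal.
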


\begin{proof}\begin{enumerate}
\item[(a)] Immediate by Lemma~\ref{lemma:omnibus}(c).
\item[(b)] This claim is true even for arbitrary maximal ideals: By maximality, it suffices to verify that~$1 \not\in
\mmm + (y)$. If~$1 \in \mmm + (y)$, then~$x = x\cdot1 \in (x) \cdot \mmm + (xy)
\subseteq \mmm$ by~$xy \in \mmm$, hence~$x \in \mmm$, thus~$1 = 0$ by~$x \not\in
\mmm$.
\item[(c)] Let~$x^k \in \mmm$. Then~$1 \not\in \mmm + (x)$, for if~$1 \in
\mmm + (x)$, then also~$1 = 1^k \in (\mmm + (x))^k \subseteq \mmm + (x^k)
\subseteq \mmm$. Hence~$x \in \mmm$ by maximality.
\end{enumerate}
\end{proof}

\begin{remark}The ideal~$\mmm$ is double negation stable: for every
ring element~$x$, if~$\neg\neg(x \in \mmm)$, then~$x \in \mmm$. This is because
by Lemma~\ref{lemma:omnibus}(c) membership of~$\mmm$ is a negative condition
and~$\neg\neg\neg\varphi \Rightarrow \neg\varphi$ is a tautology of minimal
logic.\end{remark}

This first-order maximality condition is  
equivalent~\cite{berardi-valentini:krivine} to the following higher-order version: For every ideal~$\nnn$ such that~$1
\not\in \nnn$, if~$\mmm \subseteq \nnn$, then~$\mmm = \nnn$.

The quotient ring~$A/\mmm$ is a \emph{residue field} in that~$1 \neq 0$
and that every element which is not invertible is zero---as with the real or
complex numbers in constructive mathematics.\footnote{Residue fields have
many of the basic properties of the fields from classical mathematics. For
instance, minimal generating families of vector spaces over residue fields are
linearly independent, finitely generated vector spaces do (up to~$\neg\neg$) have
a finite basis, monic polynomials possess splitting fields and Noether
normalization is available (the proofs
in~\cite{mines-richman-ruitenburg:constructive-algebra} can be suitably
adapted). The constructively rarer \emph{geometric fields}---those kinds of
fields for which every element is either invertible or zero---are required to
ensure, for instance, that kernels of matrices are finite dimensional and that bilinear forms
are diagonalizable.}
Each of the latter is in fact a \emph{Heyting field},
a residue field which also is a \emph{local ring}: if a finite
sum is invertible then one of the summands is.

\begin{example}If we enumerate~$\ZZ$ by~$0,1,-1,2,-2,\ldots$, the
ideal~$\mmm$ coincides with~$(2)$. If the enumeration starts with a
prime~$p$, the ideal~$\mmm$ coincides with~$(p)$.\end{example}

\begin{example}If~$A$ is a local ring with group of units~$A^\times$, then~$\mmm = A \setminus A^\times$.\end{example}

\begin{example}\label{ex:maximal-above}We can also use an
arbitrary ideal~$\aaa$ as~$\mmm_0$ instead of the zero ideal. All results in
this section remain valid once ``not~$\varphi$'' is redefined
as~``$\varphi \Rightarrow 1\in\aaa$''; the resulting ideal~$\mmm$ is then a
maximal ideal above~$\aaa$; it is proper in the sense that~$1 \in \mmm
\Rightarrow 1 \in \aaa$. It can also be obtained by applying the original
version of the construction in the quotient ring~$A/\aaa$ (which is again
countable) and taking the inverse image of the resulting ideal along the
canonical projection~$A \to A/\aaa$.\end{example}

\begin{example}Assume that~$A$ is a field. Let~$f \in A[X]$ be a nonconstant monic
polynomial. Since~$f$ is monic, it is not invertible; thus
Example~\ref{ex:maximal-above} shows that there is a maximal ideal~$\mmm$
above~$(f)$. Hence~$A[X]/\mmm$ is a field in which~$f$ has a zero, namely the equivalence class of~$X$.
Iterating this \emph{Kronecker construction}, we obtain a splitting field of~$f$. No
assumption regarding decidability of reducibility has to be made, but in return
the resulting fields are only residue fields.\end{example}

If we can decide whether a finitely generated ideal contains the
unit or not, we can improve on Corollary~\ref{cor:is-prime-max}(a). For instance this is the case for
strongly discrete rings such as the ring~$\ZZ$, more generally for the ring of
integers of every number field, and for polynomial rings over discrete
fields~\cite[Theorem~VIII.1.5]{mines-richman-ruitenburg:constructive-algebra}.

\begin{proposition}\label{prop:with-test}
Assume that for every finitely generated ideal~$\aaa \subseteq A$ we have~$1
\not\in \aaa$ or~$\neg(1 \not\in \aaa)$. Then:
\begin{enumerate}
\item[\textnormal{(a)}] Each ideal~$\mmm_n$ is finitely generated.
\item[\textnormal{(b)}] The ideal~$\mmm$ is detachable.
\end{enumerate}
If even~$1 \in \aaa$ or~$1 \not\in \aaa$ for every finitely generated ideal~$\aaa \subseteq
A$, then:
\begin{enumerate}
\addtocounter{enumi}{2}
\item[\textnormal{(c)}] The ideal~$\mmm$ is maximal in the strong sense that for every element~$x
\in A$,~$x \in \mmm$ or~$1 \in \mmm + (x)$, which is to say that the quotient ring~$A/\mmm$ is a
\emph{geometric field} (every element is zero or invertible).\footnote{This
notion of a maximal ideal, together with the corresponding one of a complete
theory in propositional logic, has been generalized to the concept of a
complete coalition~\cite{schuster-wessel:cie2020,schuster-wessel:jacincpred} for an abstract inconsistency
predicate.}
\end{enumerate}
\end{proposition}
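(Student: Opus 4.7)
The plan is to handle the three parts in order: (a) by induction on $n$, then (b) and (c) as direct consequences of Lemma~\ref{lemma:omnibus}(c), applied in each case to the decidability hypothesis for the specific finitely generated ideal $\mmm_n + (x_n)$.

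For (a), I would induct on $n$. The base $\mmm_0 = \{0\}$ is finitely generated. If $\mmm_n = (a_1, \ldots, a_k)$, then $\mmm_n + (x_n) = (a_1, \ldots, a_k, x_n)$ is finitely generated, so the weak hypothesis offers two alternatives. If $1 \not\in \mmm_n + (x_n)$, the explicit description of $(M_n)$ forces $\mmm_{n+1} = \mmm_n + (x_n)$, which is finitely generated. If instead $\neg(1 \not\in \mmm_n + (x_n))$, I claim $\mmm_{n+1} = \mmm_n$: given $v \in \mmm_{n+1}$, write $v = y + a x_n$ with $y \in \mmm_n$ and either $a = 0$ (so $v = y \in \mmm_n$) or $1 \not\in \mmm_n + (x_n)$; in the latter sub-case the case assumption yields $1 = 0$, whence $v = v \cdot 1 = v \cdot 0 = 0 \in \mmm_n$. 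This second case is the only subtle step of the proposition: it is where the negation in the hypothesis is genuinely used, via the principle that $1=0$ collapses the ring.

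For (b), given $x \in A$, I would choose $n$ with $x = x_n$ using the enumeration and invoke the weak hypothesis on the finitely generated ideal $\mmm_n + (x_n)$. If $1 \not\in \mmm_n + (x_n)$, Lemma~\ref{lemma:omnibus}(c) immediately gives $x \in \mmm$. Otherwise, to see $x \not\in \mmm$, I would suppose $x \in \mmm$; Lemma~\ref{lemma:omnibus}(c) then yields $1 \not\in \mmm_n + (x_n)$, and the case assumption $\neg(1 \not\in \mmm_n + (x_n))$ delivers $1 = 0$.

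For (c) I would repeat the pattern under the stronger hypothesis, testing whether $1 \in \mmm_n + (x_n)$ or $1 \not\in \mmm_n + (x_n)$ (where again $x = x_n$). In the first case, $\mmm_n \subseteq \mmm$ gives $1 \in \mmm + (x)$ directly; in the second case, Lemma~\ref{lemma:omnibus}(c) gives $x \in \mmm$. The expected main obstacle throughout is the inductive step of (a), since (b) and (c) reduce to the same two-case analysis on $\mmm_n + (x_n)$ once (a) guarantees that this ideal is finitely generated.
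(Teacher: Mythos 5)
Your proof is correct and follows essentially the same route as the paper: induction for~(a), then a two-way case split on the hypothesis applied to~$\mmm_n + (x_n)$ combined with Lemma~\ref{lemma:omnibus}(c) for~(b) and~(c). The only difference is that in part~(a) you spell out the verification that~$\mmm_{n+1} = \mmm_n$ under~$\neg(1 \not\in \mmm_n + (x_n))$, a detail the paper states without elaboration; your treatment is a useful unpacking rather than a divergence.
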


\begin{proof}We verify claim~(a) by induction over~$n$. The case~$n = 0$ is
clear. Let~$n > 0$. By the induction hypothesis, the ideal~$\mmm_{n-1}$ is finitely
generated, hence so is~$\mmm_{n-1} + (x_{n-1})$. By assumption,~$1 \not\in \mmm_{n-1} +
(x_{n-1})$ or~$\neg(1 \not\in \mmm_{n-1} + (x_{n-1}))$. In the first
case~$\mmm_n = \mmm_{n-1} + (x_{n-1})$. In the second case~$\mmm_n =
\mmm_{n-1}$. In both cases
the ideal~$\mmm_n$ is finitely generated.

To verify claim~(b), let an element~$x_n \in A$ be given. By assumption,~$1
\not\in \mmm_n + (x_n)$ or~$\neg(1 \not\in \mmm_n + (x_n))$. Hence~$x_n \in
\mmm$ or~$x_n \not\in \mmm$ by Lemma~\ref{lemma:omnibus}(c).

For claim~(c), let an element~$x_n \in A$ be given. If~$1 \in \mmm_n + (x_n)$,
then also~$1 \in \mmm + (x_n)$. If~$1 \not\in \mmm_n + (x_n)$, then~$x_n \in
\mmm$ by Lemma~\ref{lemma:omnibus}(c).
\end{proof}

Remarkably, under the assumption of Proposition~\ref{prop:with-test}, the ideal~$\mmm$ is detachable even though in
general it fails to be finitely generated. Usually in constructive mathematics, ideals which are not
finitely generated are seldom detachable. For instance the ideal~$\{ x \in
\ZZ \,|\, x = 0 \vee \varphi \} \subseteq \ZZ$ is detachable if and only
if~$\varphi \vee \neg\varphi$.

\begin{remark}\label{rem:via-generators}There is an equivalent description of the
maximal ideal~$\mmm$ which uses sets~$G_n$ of generators as proxies for the
intermediate ideals~$\mmm_n$:
\begin{align*}
  G_0 &\defeq \emptyset &
  G_{n+1} &\defeq G_n \cup \{ x_n \,|\, 1 \not\in (G_n \cup \{ x_n \}) \}
\end{align*}
An induction establishes the relation~$(G_n) = \mmm_n$; setting~$G \defeq
\bigcup_{n\in\NN} G_n$, the analogue of Lemma~\ref{lemma:omnibus}(c) states
that for every number~$n \in \NN$, the following are equivalent:
(1)~$x_n \in G_{n+1}$.
(2)~$x_n \in G$.
(3)~$1 \not\in (G) + (x_n)$.
(4)~$1 \not\in (G_n) + (x_n)$.

In particular, not only do we have that~$(G) = \mmm$, but~$G$ itself is already
an ideal. This description of~$\mmm$ is in a sense more ``economical'' as the
intermediate stages~$G_n$ are smaller (not yet being ideals), enabling
arithmetization in Section~\ref{sect:arithmetization}.
\end{remark}

\newcommand{\rightrightharpoonup}{\mathrel{\mathrlap{\rightharpoonup}\mkern1mu\rightharpoonup}}
\begin{remark}All results in this section carry over mutatis mutandis if~$A$ is
only assumed to be subcountable, that is, if we are only
given a \emph{partially defined} surjection~$\NN \rightrightharpoonup A$. In
this case, we are given an enumeration~$x_0,x_1,\ldots$ where some~$x_i$
might not be defined; we then define~$\mmm_{n+1} \defeq
\mmm_n + (\{ x_n \,|\, \text{$x_n$ is defined} \wedge 1 \not\in \mmm_n + (x_n) \})$.
The generalization to the subcountable case is particularly useful in the
Russian tradition of constructive mathematics as exhibited by the ef{}fective
topos~\cite{hyland:effective-topos,oosten:realizability,phoa:effective,bauer:c2c},
where many rings of interest are subcountable, including uncountable ones such as the real
numbers~\cite[Prop.~7.2]{hyland:effective-topos}.
\end{remark}

\section{On the intersection of all prime ideals}

Classically, Krull's lemma states that the intersection of all prime ideals is the
\emph{nilradical}, the ideal~$\sqrt{(0)}$ of all nilpotent elements. In our
setup, we have the following substitute concerning complements:
\[ \sqrt{(0)}^c =
  \bigcup_{\substack{\text{$\ppp \subseteq A$}\\\text{$\ppp$ prime}\\\text{$\ppp$ $\neg\neg$-stable}}} \ppp^c =
  \bigcup_{\substack{\text{$\ppp \subseteq A$}\\\text{$\ppp$ prime}\\\text{$\ppp$ radical}}} \ppp^c. \]
\newpage

\begin{lemma}\label{lemma:x-prime}
Let~$x \in A$. Then there is an ideal~$\ppp \subseteq A$ which is
\begin{enumerate}
\item ``$x$-prime'' in the sense that
$1 \in \ppp \Rightarrow x \in \sqrt{(0)}$ and
$ab \in \ppp \wedge \bigl(b \in \ppp \Rightarrow x \in \sqrt{(0)}\bigr) \Longrightarrow
   a \in \ppp$,
that is, prime if the negations occurring in the definition of ``prime ideal''
are understood as~``$\varphi \Rightarrow x \in \sqrt{(0)}$'',
\item ``$x$-stable'' in the sense that
$\bigl((a \in \ppp \Rightarrow x \in \sqrt{(0)}) \Rightarrow x \in \sqrt{(0)}\bigr)
  \Rightarrow a \in \ppp$,
\item radical,
\item and such that~$x \in \ppp$ if and only if~$x$ is nilpotent.
\end{enumerate}
\end{lemma}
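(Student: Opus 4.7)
The plan is to reduce the statement to the construction of Section~\ref{sect:constr}, applied not to~$A$ itself but to the localization~$A[x^{-1}]$. Since~$A$ is countable, so is~$A[x^{-1}]$, whose elements are represented by pairs in~$A \times \NN$. The key observation, provable already in minimal logic, is that the equation~$1 =_{A[x^{-1}]} 0$ is equivalent to~$x \in \sqrt{(0)}$; hence the relative negation used in Section~\ref{sect:constr}, interpreted in~$A[x^{-1}]$, matches after pullback to~$A$ exactly the relativization~``$\varphi \Rightarrow x \in \sqrt{(0)}$'' that appears in the lemma.

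First I would apply the construction of Section~\ref{sect:constr} inside~$A[x^{-1}]$ to obtain a maximal ideal~$\mmm \subseteq A[x^{-1}]$ which, by Corollary~\ref{cor:is-prime-max} together with the remark on~$\neg\neg$-stability, is moreover prime, radical, and $\neg\neg$-stable. I would then take~$\ppp$ to be the preimage of~$\mmm$ along the canonical homomorphism~$\iota\colon A \to A[x^{-1}]$; this is clearly an ideal of~$A$.

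Verifying the four properties is then essentially a matter of translation. Property~(1) follows from the primeness of~$\mmm$, since pulling back across~$\iota$ turns the relative negation on~$A[x^{-1}]$, namely ``$\varphi \Rightarrow 1 = 0$'', into the $x$-relative negation on~$A$, namely ``$\varphi \Rightarrow x \in \sqrt{(0)}$''. Property~(2) follows analogously from the $\neg\neg$-stability of~$\mmm$, since the corresponding double negation is precisely the $x$-stability condition. Property~(3) is immediate from the standard observation that preimages of radical ideals are radical. For property~(4), note that~$\iota(x)$ is a unit in~$A[x^{-1}]$; so~$\iota(x) \in \mmm$ forces~$1 \in \mmm$, whence~$x \in \sqrt{(0)}$ by properness, whereas if~$x$ is nilpotent then~$A[x^{-1}]$ is the trivial ring and~$\ppp = A$, so in particular~$x \in \ppp$.

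The main difficulty I anticipate is not any single step but the careful bookkeeping of these translations in minimal logic: one must check that the equivalence between~$1 =_{A[x^{-1}]} 0$ and~$x \in \sqrt{(0)}$ is genuinely constructive, and that each relativized property of~$\mmm$ pulls back to the intended $x$-relativized property of~$\ppp$ without any hidden use of reductio ad absurdum. Everything else, including the countability of~$A[x^{-1}]$ and the ideal-theoretic preimage arguments, is routine.
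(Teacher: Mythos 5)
Your proposal matches the paper's own proof exactly: localize at~$x$, apply the construction of Section~\ref{sect:constr} to the still-countable ring~$A[x^{-1}]$, observe that~$1 =_{A[x^{-1}]} 0$ is equivalent to~$x \in \sqrt{(0)}$, and take the preimage of the resulting maximal ideal under~$A \to A[x^{-1}]$. You spell out the verification of the four properties in a bit more detail than the paper, but the decomposition and key ideas are identical.
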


\begin{proof}The localization~$A[x^{-1}]$ is again countable, hence the
construction of Section~\ref{sect:constr} can be carried out to obtain a
maximal (and hence prime) ideal~$\mmm \subseteq A[x^{-1}]$. Every negation
occurring in the terms ``maximal ideal'' and ``prime ideal'' refers to~$1 = 0$
in~$A[x^{-1}]$, which is equivalent to~$x$ being nilpotent.

The preimage of~$\mmm$ under the localization homomorphism~$A \to A[x^{-1}]$ is
the desired~$x$-prime ideal.
\end{proof}

\begin{corollary}[Krull~\cite{krull:ohne}]\label{cor:nilp-prime}Let~$x \in A$ be an element which is not nilpotent. Then there is a
(radical and $\neg\neg$-stable) prime ideal~$\ppp \subseteq A$ such that~$x \not\in \ppp$.
\end{corollary}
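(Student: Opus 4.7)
The plan is to obtain the desired ideal by directly invoking Lemma~\ref{lemma:x-prime} for the given nonnilpotent~$x$ and then translating its ``$x$-relative'' properties into the genuine minimal-logic ones by using that $x \in \sqrt{(0)}$ implies~$1 = 0$ (by hypothesis) and that~$1 = 0$ implies~$x \in \sqrt{(0)}$ (trivially, since~$1 = 0$ forces~$x = x \cdot 1 = 0$). These two implications act as a translation sandwich between negations based at~$x \in \sqrt{(0)}$ and negations based at~$1 = 0$.

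Concretely, first I apply Lemma~\ref{lemma:x-prime} to obtain an ideal~$\ppp$ that is $x$-prime, $x$-stable, radical, and satisfies~$x \in \ppp \Leftrightarrow x \in \sqrt{(0)}$. From the hypothesis that~$x$ is not nilpotent and the left-to-right direction of this equivalence, I immediately get~$x \not\in \ppp$ in the intended sense~$x \in \ppp \Rightarrow 1 = 0$. Radicality is carried over unchanged.

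Next I check primality. For~$1 \in \ppp \Rightarrow 1 = 0$, I compose the $x$-prime implication $1 \in \ppp \Rightarrow x \in \sqrt{(0)}$ with the assumption that~$x$ is not nilpotent. For the multiplicative condition, I assume~$ab \in \ppp$ and $b \in \ppp \Rightarrow 1 = 0$; since~$1 = 0$ gives~$x \in \sqrt{(0)}$, this hypothesis strengthens to $b \in \ppp \Rightarrow x \in \sqrt{(0)}$, and then $x$-primality yields~$a \in \ppp$. For $\neg\neg$-stability, given $((a \in \ppp \Rightarrow 1 = 0) \Rightarrow 1 = 0)$, I derive $((a \in \ppp \Rightarrow x \in \sqrt{(0)}) \Rightarrow x \in \sqrt{(0)})$ by the same sandwich (the outer~$1 = 0$ is produced using $x \in \sqrt{(0)} \Rightarrow 1 = 0$, and then it is turned back into~$x \in \sqrt{(0)}$), and invoke $x$-stability of~$\ppp$.

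There is no genuine obstacle; the only point requiring a modicum of care is threading the two one-way translations between the negation bases~$1 = 0$ and~$x \in \sqrt{(0)}$ through the correct polarities of the $x$-prime and $x$-stable conditions. Everything else is bookkeeping in minimal logic.
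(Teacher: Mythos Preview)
Your proof is correct and follows exactly the paper's approach: since~$x$ is not nilpotent, the two ``absurdities'' $x \in \sqrt{(0)}$ and $1 = 0$ are interderivable, so the $x$-relative negation coincides with the ordinary one and hence $x$-primality and $x$-stability become ordinary primality and $\neg\neg$-stability. The paper states this coincidence in one sentence, whereas you have unpacked the translation for each clause---which is fine and arguably clearer.
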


\begin{proof}Because~$x$ is not nilpotent, the notion of an~$x$-prime ideal and
an ordinary prime ideal coincide. Hence the claim follows from
Lemma~\ref{lemma:x-prime}.\end{proof}

An important part of constructive algebra is to devise tools to import
proofs from classical commutative algebra into the constructive
setting.\footnote{Forms of Zorn's Lemma similar to Krull's Lemma
feature prominently in algebra; to wit, in ordered algebra there are the
Artin--Schreier theorem for fields, Levi's theorem for Abelian groups and
Ribenboim's extension to modules. Dynamical algebra aside, to which we will
come back later, these statements have recently gained attention from the angle
of proof theory at large; see, for example,
\cite{rin:ukl,rin:edde,rin:edde:full,wessel:ordering,schuster-wessel:ext,wessel:conred,bon:rib,pow:occ}.}
The following two statements are established test cases exploring the power of
such tools~\cite{schuster:induction,schuster:inductionjournal,persson:constructive-spectrum,powell-schuster-wiesnet:krull,swy:dynprime,schuster-wessel:indeterminacy,banaschweski-vermeulen:radical,richman:trivial-rings,coquand-lombardi:logical,coquand-lombardi-roy:dynamicalmethod}.

\begin{proposition}\label{prop:test-cases}Let~$f \in A[X]$ be a polynomial.
\begin{enumerate}
\item If~$f$ is nilpotent in~$A[X]$, then all coefficients of~$f$ are nilpotent in~$A$.
\item If~$f$ is invertible in~$A[X]$, then all nonconstant coefficients of~$f$ are nilpotent.
\end{enumerate}
\end{proposition}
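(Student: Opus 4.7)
Both parts reduce, by induction on the (formal) degree~$n$ of~$f$, to the case of the leading coefficient. The reduction step is the same in both situations: in any commutative ring the nilpotent elements form an ideal (since $(a+b)^{n+m} = 0$ whenever $a^n = b^m = 0$), and a unit plus a nilpotent is again a unit, its inverse given by a finite geometric series. Hence once the leading coefficient~$c_n$ has been shown to be nilpotent, the polynomial $f - c_n X^n$ of smaller formal degree is again nilpotent in case~(1) or invertible in case~(2), and the inductive hypothesis finishes the job.

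For~(1) the leading-coefficient case is essentially free: writing $f = \sum_{i=0}^{n} c_i X^i$ and $f^N = 0$, the coefficient of~$X^{nN}$ in~$f^N$ is exactly~$c_n^N$ (no other product of~$N$ coefficients of~$f$ reaches total degree~$nN$), so $c_n^N = 0$ immediately and $c_n$ is nilpotent.

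For~(2) I would write $fg = 1$ with $g = \sum_{j=0}^{e} d_j X^j$ and prove by an auxiliary induction on $k = 0, \ldots, e$ that $c_n^{k+1} d_{e-k} = 0$: the base case $k = 0$ is the vanishing of the top coefficient $c_n d_e$ of~$fg$, and the inductive step multiplies the vanishing coefficient of~$X^{n+e-k}$ by~$c_n^k$, using the prior cases to annihilate the cross-terms. Setting $k = e$ yields $c_n^{e+1} d_0 = 0$, and multiplying by~$c_0$ and using the constant-term relation $c_0 d_0 = 1$ gives $c_n^{e+1} = 0$.

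The main obstacle is the index bookkeeping in~(2); the invariant $c_n^{k+1} d_{e-k} = 0$ has to be guessed with some foresight, and one must be attentive to the fact that ``degree~$n$'' here means formal degree, not a commitment that~$c_n$ is nonzero. Alternatively, in the spirit of the paragraph preceding the proposition, one can import the classical proofs more directly via Lemma~\ref{lemma:x-prime}: to show that a given coefficient~$c$ of~$f$ is nilpotent, apply the lemma with $x = c$ to obtain a $c$-prime radical ideal~$\ppp$ with $c \in \ppp$ iff~$c$ is nilpotent, and then work in~$A/\ppp$, where the $c$-relativized negation makes the ring behave like an integral domain and the textbook arguments go through, with radicality of~$\ppp$ supplying the key step $\bar c_n^N = 0 \Rightarrow \bar c_n = 0$.
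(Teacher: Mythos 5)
Your main argument is correct but is precisely the route the paper sets aside: the text just above the proposition concedes that ``both statements admit direct computational proofs which do not refer to prime ideals'' and declares that the challenge is instead to imitate the classical Krull-lemma proofs constructively, using the machinery developed in Sections~1--2. Your leading-coefficient computation for part~(1), the annihilation invariant $c_n^{k+1}d_{e-k}=0$ for part~(2), and the reduction $f \mapsto f - c_n X^n$ via ``nilpotents form an ideal'' and ``unit plus nilpotent is a unit'' are all constructively valid, and they do establish the proposition---but they do so without touching Lemma~\ref{lemma:x-prime} or the ideal~$\mmm$, so they do not serve the paper's purpose of stress-testing that construction. Your closing paragraph, by contrast, is essentially the paper's actual proof: pick the leading coefficient~$a_n$, invoke Lemma~\ref{lemma:x-prime} with~$x = a_n$ to get a radical $a_n$-prime~$\ppp$, reduce modulo~$\ppp$ where the $a_n$-relativized negation makes $A/\ppp$ reduced (part~1) respectively an integral domain with $\neg\neg$-stable equality (part~2), conclude $a_n \in \ppp$ hence $a_n$ nilpotent, and descend by induction on the formal degree. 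One small imprecision there: you speak of the quotient behaving ``like an integral domain'' as the common mechanism, but for part~(1) what is used is only that~$A/\ppp$ is reduced (radicality of~$\ppp$), with the integral-domain property needed only for part~(2); you do hint at this with the remark about radicality, but it is worth keeping the two base cases cleanly separate as the paper does.
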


These facts have abstract classical proofs employing Krull's lemma as follows.

\begin{quote}\small
  \textbf{Proof of 1.} Simple induction if~$A$ is
  reduced; the general case reduces to this one: For every prime ideal~$\ppp$,
  the coefficients of~$f$ vanish over the reduced ring~$A/\ppp$. Hence they are
  contained in all prime ideals and are thereby
  nilpotent.\medskip

  \textbf{Proof of 2.} Simple induction if~$A$ is an
  integral domain; the general case reduces to this one: For every prime
  ideal~$\ppp$, the nonconstant coefficients of~$f$ vanish over the integral
  domain~$A/\ppp$. Hence they are contained in all prime ideals
  and are thereby nilpotent.
\end{quote}

Both statements admit direct computational proofs which do not
refer to prime ideals; the challenge is not to find such proofs, but rather to
imitate the two classical proofs above constructively, staying as close as
possible to the original. It is remarkable that the construction of
Section~\ref{sect:constr} meets this challenge at all, outlined as follows, despite its fundamental
reliance on nondetachable subsets.

We continue assuming that~$A$ is countable:
Section~\ref{sect:wlog} indicates how this assumption can be dropped in quite
general situations, while for the purposes of specific challenges such as Proposition~\ref{prop:test-cases} we could also
simply pass to the countable subring generated by the polynomial coefficients
or employ the method of indeterminate coefficients.

\begin{proof}[of Proposition~\ref{prop:test-cases}]
The first claim follows from a simple induction if~$A$ is a reduced
ring.
In the general case, write~$f = a_n X^n + a_{n-1} X^{n-1} + \cdots + a_0$. Let~$\ppp$
be a radical~$a_n$-prime ideal as in Lemma~\ref{lemma:x-prime}. Since~$A/\ppp$
is reduced, the nilpotent coefficient~$a_n$ vanishes over~$A/\ppp$. Thus~$a_n \in \ppp$,
hence~$a_n$ is nilpotent. Since the polynomial~$f - a_n X^n$ is again
nilpotent, we can continue by induction.

The second claim follows by a simple inductive argument if~$A$ is an
integral domain with double negation stable equality.
In the general case, write~$f = a_n X^n + \cdots + a_0$
and assume~$n \geq 1$. To reduce to the integral situation, let~$\ppp$ be
an~$a_n$-prime ideal as in Lemma~\ref{lemma:x-prime}.
With negation~``$\neg\varphi$'' understood as~``$\varphi \Rightarrow a_n \in
\sqrt{(0)}$'', the quotient ring~$A/\ppp$ is an integral domain with double
negation stable equality.
Hence~$a_n = 0$ in~$A/\ppp$, so~$a_n \in \ppp$ whereby~$a_n$ is nilpotent. The
polynomial~$f - a_n X^n$ is again invertible in~$A[X]$ (since the group of
units is closed under adding nilpotent elements) so that we can continue by
induction.
\end{proof}

Just as Corollary~\ref{cor:nilp-prime} is a constructive substitute
for the recognition of the intersection of all prime ideals as the nilradical,
the following proposition is a substitute for the classical fact that
the intersection of all maximal ideals is the Jacobson radical.
As is customary in constructive
algebra~\cite[Section~IX.1]{lombardi-quitte:constructive-algebra}, by
\emph{Jacobson radical} we mean the ideal
$\{ x \in A \,|\, \forall y \in A\_ 1 - xy \in A^\times \}$.

\begin{proposition}Let~$x \in A$. If~$x$ is \emph{apart} from the Jacobson radical (that is, $1-xy \not\in A^\times$ for some element~$y$), then
there is a maximal ideal~$\mmm$ such that~$x \not\in \mmm$.
\end{proposition}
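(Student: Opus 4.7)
The plan is to mirror the classical argument: find a maximal ideal~$\mmm$ containing~$1-xy$; then $x \not\in \mmm$ follows because otherwise $1 = (1-xy) + xy \in \mmm$ would contradict properness. Constructively, I would obtain such an~$\mmm$ via Example~\ref{ex:maximal-above}, applying the construction of Section~\ref{sect:constr} with~$\aaa \defeq (1-xy)$ as starting ideal (equivalently, running the original construction on the still-countable quotient~$A/\aaa$ and pulling back), so that ``not~$\varphi$'' is read throughout as ``$\varphi \Rightarrow 1 \in \aaa$''. This produces an ideal $\mmm \supseteq \aaa$ which is proper and maximal in this relative sense.

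From there I would derive three facts in turn. First, relative properness together with the apartness hypothesis yields standard properness: if~$1 \in \mmm$, then $1 \in \aaa = (1-xy)$, so $1 = c(1-xy)$ for some~$c \in A$, exhibiting $1-xy$ as invertible, whence $1 = 0$ by the apartness hypothesis. Second, standard maximality follows from relative maximality because the tautology $1 = 0 \Rightarrow 1 \in \aaa$ (using $0 \in \aaa$) lets one transport any hypothesis of the form $1 \in \mmm + (z) \Rightarrow 1 = 0$ to $1 \in \mmm + (z) \Rightarrow 1 \in \aaa$, to which relative maximality applies. Third, for $x \not\in \mmm$: assume $x \in \mmm$; then $xy \in \mmm$ and $1-xy \in \aaa \subseteq \mmm$ give $1 \in \mmm$, and the just-established standard properness concludes $1 = 0$.

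The only real subtlety, which I would keep in mind throughout, is to keep the two meanings of negation cleanly apart and to verify that, under the apartness hypothesis on~$x$, the relativized properness and maximality delivered by Example~\ref{ex:maximal-above} do upgrade to the absolute notions required in the conclusion. No new recursive construction is needed beyond the one already in place.
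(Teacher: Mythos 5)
Your proposal is correct and follows essentially the same route as the paper: obtain a maximal ideal above $\aaa = (1-xy)$ via Example~\ref{ex:maximal-above} and conclude $x \not\in \mmm$ from properness via $1 = (1-xy) + xy$. The paper's proof merely asserts that $\mmm$ is maximal ``also as an ideal of~$A$,'' whereas you helpfully spell out why the relativized properness and maximality upgrade to the absolute notions (using the apartness hypothesis for properness and the trivial implication $1=0 \Rightarrow 1\in\aaa$ for the maximality clause).
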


\begin{proof}The standard proof as
in~\cite[Lemma~IX.1.1]{lombardi-quitte:constructive-algebra} applies: There is
an element~$y$ such that~$1-xy$
is not invertible. By Example~\ref{ex:maximal-above}, there is an ideal~$\mmm$
above~$\aaa \defeq (1-xy)$ which is maximal not only as an ideal of~$A/\aaa$
(where~``$\neg\varphi$'' means~``$\varphi \Rightarrow 1 \in \aaa$'') but also as an
ideal of~$A$ (where~``$\neg\varphi$'' means~``$\varphi \Rightarrow 1 = 0$''). If~$x
\in \mmm$, then~$1 = (1-xy) + xy \in \mmm$; hence~$x \not\in \mmm$.
\end{proof}

The two test cases presented in Proposition~\ref{prop:test-cases} only concern
prime ideals. In contrast, the following example crucially rests on the
maximality of the ideal~$\mmm$.

\begin{proposition}Let~$M \in A^{n \times m}$ be a matrix with more rows than
columns. Assume that the induced linear map~$A^m \to A^n$ is surjective.
Then~$1 = 0$.
\end{proposition}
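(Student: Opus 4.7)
The plan is to pass to the residue field $k \defeq A/\mmm$ via the maximal ideal of Section~\ref{sect:constr}. Since $\mmm$ is proper, it suffices to derive $1 = 0$ in $k$, as this implies $1 \in \mmm$ and thus $1 = 0$ in $A$. Surjectivity of $M \colon A^m \to A^n$ descends to a surjection $\bar M \colon k^m \to k^n$; pulling back the standard basis $e_1, \ldots, e_n$ yields a matrix $N \in k^{m \times n}$ with $\bar M N = I_n$.

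I would then square things up by padding with zeros. Let $\tilde M \in k^{n \times n}$ be obtained from $\bar M$ by appending $n - m$ zero columns, and let $\tilde N \in k^{n \times n}$ be obtained from $N$ by appending $n - m$ zero rows. A direct block calculation gives $\tilde M \tilde N = \bar M N = I_n$, so the multiplicativity of determinant yields $\det(\tilde M)\det(\tilde N) = 1$ in $k$. But $\tilde M$ has at least one zero column, since $n > m$, forcing $\det(\tilde M) = 0$. Hence $0 = 1$ in $k$, and by properness of~$\mmm$ we conclude $1 = 0$ in $A$.

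The reliance on maximality appears already in the very choice of the quotient: replacing $\mmm$ by a merely prime ideal, as in Proposition~\ref{prop:test-cases}, would give only an integral domain, and while the determinant manipulation is a polynomial identity valid in every commutative ring, its reading as the impossibility of a surjection between vector spaces of mismatched dimension---the classical picture driving the intuition---becomes fully faithful only in the residue-field setting afforded by maximality. The main obstacle is not really technical but rather conceptual: to verify that the padding-plus-determinant argument transfers cleanly to minimal logic, which it does because every step is a polynomial computation free of case analysis, and to ensure that $1 = 0$ appears directly in the arithmetic of $k$ rather than being hidden inside an appeal to classical linear algebra.
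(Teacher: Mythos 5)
Your proof is correct, and the determinant computation transfers to minimal logic since it is a pure polynomial identity with no case distinction. It takes a genuinely different route from the paper, whose proof transforms the matrix by elementary row and column operations and appeals to the residue field dichotomy (every entry, being noninvertible or not, is zero or not) to drive an induction on the matrix size.

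However, your route reveals that the maximal ideal is in fact dispensable here: the entire argument works directly over $A$. Since $M\colon A^m \to A^n$ is surjective, pull back the standard basis vectors $e_1,\ldots,e_n \in A^n$ to obtain $N \in A^{m\times n}$ with $MN = I_n$ (finitely many witnesses, no choice needed); pad to square matrices $\tilde M, \tilde N \in A^{n\times n}$; then $\det(\tilde M)\det(\tilde N) = 1$ while $\det(\tilde M) = 0$ because $\tilde M$ has a zero column, giving $1 = 0$ in $A$ outright. The detour through $k = A/\mmm$ adds nothing, and your closing paragraph---claiming that the residue-field setting is what makes the argument ``fully faithful''---is misleading, since the polynomial identity never invokes the field structure of $k$. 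This cuts against the paper's stated purpose for the proposition, which is introduced precisely as an example that ``crucially rests on the maximality of the ideal $\mmm$'' in contrast to Proposition~\ref{prop:test-cases}. That dependence is a feature of the paper's chosen proof strategy (reduce mod $\mmm$, then do classical linear algebra over a residue field), not of the statement itself, which your argument shows to be a short consequence of determinant multiplicativity in arbitrary commutative rings, without any countability or ideal-theoretic apparatus.
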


\begin{proof}By passing to the quotient~$A/\mmm$, we may assume that~$A$ is a
residue field. In this case the claim is standard linear algebra:
If any of the matrix entries
is invertible, the matrix could be transformed by elementary row and
column operations to a matrix of the form~$\left(\begin{smallmatrix}1 & 0 \\ 0 &
M'\end{smallmatrix}\right)$, where the induced linear map of the submatrix~$M'$ is again
surjective. Thus~$1 = 0$ by induction.

Hence by the residue field property all matrix entries are zero.
But the vector $(1,0,\ldots,0)\in A^n$ still
belongs to the range of $M=0$, hence~$1=0$ by~$n > 0$.
\end{proof}

\begin{remark}\label{rem:suslin}
A more significant case study is Suslin's lemma, the fundamental and originally
non-constructive ingredient in his second solution of Serre's
problem~\cite{suslin:structure}. The classical proof, concisely recalled in
Yengui's constructive account~\cite{yengui:maximal}, reduces modulo
maximal ideals. The construction of Section~\ref{sect:constr} offers a
constructive substitute. However, since gcd computations
are required in the quotient rings, it is not enough that they are residue
fields; they need to be geometric fields. Hence our approach has to be combined
with the technique variously known as \emph{Friedman's trick}, \emph{nontrivial
exit continuation} or \emph{baby version of Barr's theorem} in order to yield a
constructive
proof~\cite{friedman:trick,murthy:classical-proofs,barr:without-points,blechschmidt:generalized-spaces}.
\end{remark}

\section{In Heyting arithmetic}
\label{sect:arithmetization}

The construction presented in Section~\ref{sect:constr} crucially rests on the
flexibility of nondetachable subsets: In absence of additional
assumptions as in Proposition~\ref{prop:with-test},
we cannot give the ideals~$\mmm_n$ by decidable
predicates~$A \to \{0,1\}$---without additional hypotheses on~$A$, membership of the
ideals~$\mmm_n$ is not decidable. As such, the construction is
naturally formalized in intuitionistic set theories such as~\textsc{czf}
or~\textsc{izf}, which natively support such flexible subsets.

In this section, we explain how with some more care, the construction can also
be carried out in much weaker foundations such as Heyting
arithmetic~\textsc{ha}. While formulation in classical Peano arithmetic~\textsc{pa}
is routine, the development in~\textsc{ha} crucially rests on a specific
feature of the construction, namely that the condition for membership is a
negative condition.

To set the stage, we specify what we mean by a \emph{ring} in the context of
arithmetic. One option would be to decree that an arithmetized ring should be a single
natural number coding a finite set of ring elements and the graphs of the
corresponding ring operations; however, this perspective is too narrow, as we
also want to work with infinite rings.

Instead, an arithmetized ring should be given by a ``formulaic setoid with ring
structure'', that is: by a formula~$A(n)$ with free
variable~$n$, singling out which natural numbers constitute
representatives of the ring elements; by a formula~$E(n,m)$ describing which
representatives are deemed equivalent; by a formula~$Z(n)$ singling out
representatives of the zero element; by a formula~$P(n,m,s)$ singling out
representatives~$s$ of sums; and so on with the remaining data constituting
a ring; such that axioms such as\par
{\vspace*{-1.2em}\small\begin{align*}
  \forall n\_ & Z(n) \Rightarrow A(n) && \text{``every zero representative belongs to the ring''} \\
  \exists n\_ & Z(n) && \text{``there is a zero representative''} \\
  \forall n,m\_ & Z(n) \wedge Z(m) \Longrightarrow E(n,m) && \text{``every two zero representatives are equivalent''} \\
  \forall z,n\_ & Z(z) \wedge A(n) \Longrightarrow P(z,n,n) && \text{``zero is neutral with respect to addition''}
\end{align*}}%
hold. This conception of arithmetized rings deviates from the usual definition
in reverse mathematics~\cite[Definition~III.5.1]{simpson:subsystems} to support
quotients even when~\textsc{ha} cannot verify the existence of
canonical representatives of equivalence classes.

Although first-order arithmetic cannot quantify over ideals of arithmetized
rings, specific ideals can be given by formulas~$I(n)$ such that axioms such as
{\small\begin{align*}
  \forall n\_ & I(n) \Rightarrow A(n) && \text{``$I \subseteq A$''} \\
  \exists n\_ & Z(n) \wedge I(n) && \text{``$0 \in I$''}
\end{align*}}%
hold. It is in this sense that we are striving to adapt the construction of
Section~\ref{sect:constr} to describe a maximal ideal.

In this context, we can arithmetically imitate any set-theoretic description of
a single ideal as a subset cut out by an explicit first-order formula. However, for
recursively defined families of ideals, we require a suitable recursion
theorem: If we are given (individual formulas~$M_n(x)$ indexed by numerals
representing) ideals~$\mmm_0,\mmm_1,\mmm_2,\ldots$, we cannot generally
form~$\bigcup_{n\in\NN} \mmm_n$, as the naive formula~``$\bigvee_{n\in\NN} M_n(x)$''
representing their union would have infinite length. We can take the union only
if the family is \emph{uniformly represented} by a single formula~$M(n,x)$ (expressing
that~$x$ represents an element of~$\mmm_n$).

This restriction is a blocking issue for arithmetizing the
construction of the chain~$\mmm_0 \subseteq \mmm_1 \subseteq \cdots$
of Section~\ref{sect:constr}. Because~$\mmm_n$ occurs in the
definition of~$\mmm_{n+1}$ in negative position, naive arithmetization
results in formulas of unbounded logical complexity, suggesting that
a uniform definition might not be possible.

This issue has a counterpart in type-theoretic foundations of mathematics,
where the family~$(\mmm_n)_{n \in \NN}$ cannot be given as an inductive family
(failing the positivity check), and is also noted, though not resolved, in
related work~\cite[p.~11]{herbelin-ilik:henkin}.
The issue does not arise in the context of~\textsc{pa}, where the law of
excluded middle allows us to bound the logical complexity: We can blithely define
the joint indicator function~$g(n,i)$ for the sets~$G_n$ (such that~$G_n = \{
x_i \,|\, i \in \NN, g(n,i) = 1 \}$) of Remark~\ref{rem:via-generators} by the recursion
\begin{align*}
  g(0,i) &= 0 \\
  g(n+1,i) &= \begin{cases}
    1, & \text{if $g(n,i) = 1 \vee (i = n \wedge 1 \not\in
    (g(n,0)x_0,\ldots,g(n,n-1)x_{n-1},x_n))$} \\
    0, & \text{else.}
  \end{cases}
\end{align*}
This recursion can be carried out within~\textsc{pa} since the recursive step
only references the finitely many values~$g(n,0),\ldots,g(n,i)$.
Heyting arithmetic, however, does not support this case distinction. The
formalization of the construction in~\textsc{ha} is only unlocked by the following
direct characterization.

\begin{lemma}\label{lemma:uniform-char}(In the situation of Remark~\ref{rem:via-generators}.)
For every finite binary sequence~$v = [v_0,\ldots,v_{n-1}]$, set~$\aaa_v
\defeq (v_0x_0,\ldots,v_{n-1}x_{n-1},x_n)$. Then:
\begin{enumerate}
\item For every such sequence~$v = [v_0,\ldots,v_{n-1}]$, if
$\bigwedge_{i=0}^{n-1} (v_i = 1 \Leftrightarrow 1 \not\in \aaa_{[v_0,\ldots,v_{i-1}]})$,
then~$\aaa_v = (G_n) + (x_n)$.
In particular, in this case~$x_n \in G$ if and only if~$1 \not\in \aaa_v$.
\item For every natural number~$n \in \NN$,
\vspace*{-1.2em}
\[ x_n \in G \quad\Longleftrightarrow\quad \neg
  \exists v \in \{0,1\}^n\_
    1 \in \aaa_v \wedge
      \bigwedge_{i=0}^{n-1} (v_i = 1 \Leftrightarrow 1 \not\in \aaa_{[v_0,\ldots,v_{i-1}]}). \]
\end{enumerate}
\end{lemma}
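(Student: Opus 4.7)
The plan is to prove Part~(1) by induction on~$n$ and then deduce Part~(2) from it together with a triple-negation trick that is the main subtlety.

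For Part~(1), I would strengthen the inductive hypothesis to assert that the prefix ideal $(v_0 x_0, \ldots, v_{n-1} x_{n-1})$ equals $(G_n)$, so that $\aaa_v = (G_n) + (x_n)$ follows at once. The base case is vacuous. For the step, given $C(v)$ with $v = [v_0,\ldots,v_n]$, the prefix $v' = [v_0,\ldots,v_{n-1}]$ satisfies the corresponding condition, so by IH $\aaa_{v'} = (G_n) + (x_n)$ and the last conjunct of $C(v)$ reads $v_n = 1 \Leftrightarrow 1 \notin (G_n) + (x_n)$. Using the explicit description $(M_n) = \{a x_n \,|\, a = 0 \vee 1 \notin (G_n) + (x_n)\}$, a short case analysis on $v_n \in \{0,1\}$ yields $(G_n) + (v_n x_n) = (G_n) + (M_n) = (G_{n+1})$; the case $v_n = 0$ is the delicate one and relies on the biconditional forcing the second alternative to imply $0 =_\NN 1$ and hence $1 =_A 0$ via the canonical homomorphism $\NN \to A$. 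The ``in particular'' clause then follows from the analogue of Lemma~\ref{lemma:omnibus}(c) for~$G$ noted in Remark~\ref{rem:via-generators}.

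For Part~(2), the forward direction is immediate: any $v$ with $C(v) \wedge 1 \in \aaa_v$ would, by Part~(1), put $1$ into $(G_n) + (x_n)$, contradicting $x_n \in G$ via the same analogue of Lemma~\ref{lemma:omnibus}(c). The backward direction is where the work lies, because we must derive $x_n \in G$, equivalently $1 \notin (G_n)+(x_n)$, without exhibiting a ``canonical'' $v$---which in general we cannot. The plan is to reduce, via Part~(1), to the auxiliary claim $\neg\neg \exists v \in \{0,1\}^n\_ C(v)$, that is, $(\forall v\_ C(v) \Rightarrow 1 =_A 0) \Rightarrow 1 =_A 0$, and to prove it by induction on~$n$.

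The crux of the inductive step, and the main obstacle of the whole proof, is the following triple-negation trick. Specialising the hypothesis on $\{0,1\}^{n+1}$ to the two extensions $[v',1]$ and $[v',0]$ of a fixed $v' \in \{0,1\}^n$ gives, under $C(v')$, the implications $1 \notin \aaa_{v'} \Rightarrow 1 =_A 0$ and $\neg\neg(1 \in \aaa_{v'}) \Rightarrow 1 =_A 0$. Setting $P \defeq 1 \in \aaa_{v'}$, these read $\neg\neg P$ and $\neg\neg\neg P$; since $\neg\neg\neg P \Leftrightarrow \neg P$ is a tautology of minimal logic, one further modus ponens yields $1 =_A 0$. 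Hence $C(v') \Rightarrow 1 =_A 0$ for every $v' \in \{0,1\}^n$, and the induction hypothesis concludes. A secondary subtlety worth flagging is the reading of $C([v',0])$: its last conjunct is $(0 =_\NN 1) \Leftrightarrow 1 \notin \aaa_{v'}$, whose nontrivial direction collapses via the canonical map $\NN \to A$ to exactly $\neg\neg(1 \in \aaa_{v'})$.
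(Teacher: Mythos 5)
Your proof is correct, and both your argument and the paper's rest on the same core logical trick: under a negation one may freely conjoin the decidability of any proposition, since $\neg\alpha \Longleftrightarrow \neg\bigl(\alpha \wedge (\varphi \vee \neg\varphi)\bigr)$ is a tautology of minimal logic. This is exactly what you isolate as the triple-negation law $\neg\neg\neg P \Rightarrow \neg P$; verifying the tautology unfolds to precisely your modus-ponens step on $\neg\neg P$ and $\neg\neg\neg P$. What differs is the packaging. The paper proves part~(2) by a single chain of four equivalences: starting from $x_n \in G \Leftrightarrow \neg(1 \in (G_n)+(x_n))$, it inserts the full conjunction $\bigwedge_{i<n}(x_i \in G \vee x_i \not\in G)$ under the negation in one stroke (iterating the tautology), replaces that conjunction by the existential $\exists v \in \{0,1\}^n\_ \bigwedge_i(v_i = 1 \Leftrightarrow x_i \in G)$, and then rewrites both the generators and the characteristic predicates via part~(1). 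You instead split the biconditional, handle the forward direction by part~(1), reduce the backward direction to the free-standing auxiliary claim $\neg\neg\exists v\in\{0,1\}^n\_ C(v)$, and run a separate induction on~$n$ to prove it. The two are logically equivalent—you apply the tautology once per inductive step where the paper applies it $n$-fold at once—but the paper's chain of equivalences is more economical and makes the uniformity of the defining formula in~$n$ (the point of the lemma for arithmetization) immediately visible. Your observation that part~(1) requires strengthening the induction hypothesis to $(v_0x_0,\ldots,v_{n-1}x_{n-1}) = (G_n)$, rather than merely $\aaa_{v'} = (G_n)+(x_n)$, is a genuine detail that the paper's ``by induction'' glosses over: with the weaker hypothesis the $v_n = 0$ case does not close, since one cannot peel $x_n$ off a finitely generated ideal. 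Finally, your ``secondary subtlety'' about reading $0 =_\NN 1$ via the canonical map $\NN \to A$ is indeed the right concern; it is shared by the paper's passage from $\bigwedge_i(x_i \in G \vee x_i \not\in G)$ to the existential over $\{0,1\}^n$, which tacitly identifies the arithmetic falsum with $1 =_A 0$. Both proofs treat this informally, and the cleanest fix is to read ``$v_i = 1 \Leftrightarrow \psi_i$'' as the pair of implications ``$v_i = 1 \Rightarrow \psi_i$'' and ``$v_i = 0 \Rightarrow \neg\psi_i$'' in the relative sense.
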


\begin{proof}The first part is by induction, employing the equivalences of
Remark~\ref{rem:via-generators}. The second rests on the tautology
$\neg\alpha \Longleftrightarrow \neg(\alpha \wedge (\varphi \vee \neg\varphi))$:

\vspace*{-1.2em}\small
\begin{align*}
  x_n \in G &
  \Longleftrightarrow \neg\bigl(1 \in (G_n) + (x_n)\bigr)
  \Longleftrightarrow \neg\bigl(1 \in (G_n) + (x_n) \ \wedge\ \bigwedge_{i=0}^{n-1} (x_i \in G \vee x_i \not\in G)\bigr) \\
  & \Longleftrightarrow \neg\exists v \in \{0,1\}^n\_
    \Bigl(1 \in (G_n) + (x_n) \ \wedge\ \bigwedge_{i=0}^{n-1} (v_i = 1
    \Leftrightarrow x_i \in G)\Bigr) \\
  & \Longleftrightarrow \neg\exists v \in \{0,1\}^n\_
    \Bigl(1 \in \aaa_v \ \wedge\ \bigwedge_{i=0}^{n-1} (v_i = 1
    \Leftrightarrow 1 \not\in \aaa_{[v_0,\ldots,v_{i-1}]})\Bigr) 
\end{align*}
\end{proof}

\noindent
Condition~(2) is manifestly formalizable in
arithmetic, uniformly in~$n$.

\section{For general rings}
\label{sect:wlog}

The construction in Section~\ref{sect:constr} of a maximal ideal applies to
countable rings. In absence of the axiom of choice, some restriction on the
rings is required, as it is well-known that the statement that any nontrivial
ring has a maximal ideal implies (over Zermelo--Fraenkel set
theory~\textsc{zf}) the axiom of choice~\cite{scott:prime-ideals,hodges:krull,banaschewski:krull,erne:krull,howard-rubin:ac}.

However, this limitation only pertains to the abstract existence of maximal
ideals, not to concrete consequences of their existence. Mathematical
logic teaches us by way of diverse examples to not conflate these two concerns. For
instance, although~\textsc{zf} does not prove the axiom of choice, it does
prove every theorem of~\textsc{zfc} pertaining only to natural numbers (by
interpreting a given~\textsc{zfc}-proof in the constructible universe~$L$
and exploiting that the natural numbers are absolute between~$V$
and~$L$~\cite{goedel:ac-gch,schoenfield:predicativity}); similarly, although intuitionistic Zermelo--Fraenkel set
theory~\textsc{izf} does not prove the law of excluded middle, it does prove
every~$\Pi^0_2$-theorem of~\textsc{zf} (by the double negation translation
combined with Friedman's continuation trick~\cite{friedman:double-negation-translation}).
A similar phenomenon concerns countability, as follows.

\subsubsection{A metatheorem by Joyal and Tierney}

Set theory teaches us that
whether a given set is countable depends not only on the set itself, but is
more aptly regarded as a property of the ambient universe~\cite{hamkins:multiverse}: Given any
set~$M$, there is a (non-Boolean) extension of the universe in which~$M$ becomes countable.
Remarkably, the passage to such an extension preserves and reflects
first-order logic.
Hence we have the metatheorem that \emph{countability assumptions from
intuitionistic proofs of first-order statements can always be mechanically
eliminated.}\footnote{For every set~$M$, there is a certain locale~$X$ (the
\emph{classifying locale of enumerations of~$M$}) which is overt, positive and such
that its constant sheaf~$\underline{M}$ is countable in the sense of the
internal language of the topos of sheaves over~$X$. A given
intuitionistic proof can then be interpreted in this topos~\cite{caramello:preliminaries,maietti:modular,shulman:categorical-logic}; since the constant
sheaf functor preserves first-order logic (by overtness), the
sheaf~$\underline{M}$ inherits any first-order assumptions about~$M$ required
by the proof; and since it also reflects first-order logic (by overtness and
positivity), the proof's conclusion descends to~$M$.\par When we apply the
construction of Section~\ref{sect:constr} internally in this topos, the result
will be a certain sheaf of ideals; it is in that sense that every ring
constructively possesses a maximal ideal. This sheaf will not be constant,
hence not originate from an actual ideal of the given ring; but first-order
consequences of the existence of this sheaf of ideals pass down to the ring.
Details are provided by Joyal and
Tierney~\cite[pp.~36f.]{joyal-tierney:grothendieck}, and introductions to
pointfree topology and topos theory can be found
in~\cite{blechschmidt:generalized-spaces,johnstone:point,vickers:continuity,vickers:locales-toposes}. A predicative account on the basis of~\cite{maietti:au,vickers:sketches,crosilla:predicativity}
is also possible. The phenomenon that size is relative also emerges in the
Löwenheim--Skolem theorem.} Crucially, the first-order restriction is only on the form
of the statements, not on the form of the proofs. These may freely employ higher-order constructs.

``First-order'' statements are statements which only refer to elements, not to
subsets; for instance, the statements of Proposition~\ref{prop:test-cases} are
first-order and hence also hold without the countability assumption.
In contrast, the statement ``there is a maximal ideal'' is a higher-order
statement; hence we cannot eliminate countability assumptions from proofs of
this statement.

The metatheorem expands the applicability of the construction of
Section~\ref{sect:constr} and underscores the value of its intuitionistic
analysis---the metatheorem cannot be applied to eliminate countability assumptions from classical proofs.
Taken together, they strengthen the
view of maximal ideals as convenient fictions~\cite[Section~1]{schuster-wessel:krull}. Maximal ideals can
carry out their work by any of the following possibilities:
(1)~For countable (or well-founded) rings, no help is required.
Section~\ref{sect:constr} presents an explicit construction of a maximal ideal.
(2)~For arbitrary rings, the existence of a maximal ideal follows from the
axiom of choice.
(3)~Intuitionistic first-order consequences of the existence of a maximal
ideal are true even if no actual maximal ideal can be constructed.

\subsubsection{Comparison with dynamical algebra}

The dynamical approach~\cite[Section~XV.6]{lombardi-quitte:constructive-algebra},~\cite{coquand-lombardi-roy:dynamicalmethod},~\cite{yengui:constructive},~\cite{duval:about}
is another technique for constructively reinterpreting,
without countability assumptions, classical proofs involving maximal ideals.
We sketch here how the dynamical approach is intimately connected with the
technique of this section, even though it is cast in entirely different
language.

Suppose that a given classical proof appeals to the maximality condition ``$x \in \mmm$
or~$1 \in \mmm + (x)$'' (``$x$ is zero modulo~$\mmm$ or invertible
modulo~$\mmm$'') only for a finite number~$x_0,\ldots,x_{n-1}$ of ring elements
fixed beforehand. In this case we can, even if no enumeration of all elements
of~$A$ exists or is available, apply the construction in Section~\ref{sect:constr} to
this finite enumeration and use the resulting ideal~$\mmm_n$ as a partial
substitute for an intangible maximal ideal.

The tools from pointfree topology
driving Joyal and Tierney's metatheorem widen the applicability of this partial
substitute to cases where the inspected ring elements are
not fixed beforehand, by dynamically growing the partial enumeration as the
proof runs its course. If required, a continuation-passing style transform as
in Remark~\ref{rem:suslin} can upgrade the maximal ideal from only
satisfying~``$1 \not\in \mmm + (x)$ implies~$x \in
\mmm$'' to satisfying the stronger condition~``$x \in \mmm$ or~$1 \in \mmm
+ (x)$''.
Unfolding the construction of~$\mmm$ and the proof of Joyal
and Tierney's metatheorem, we arrive at the dynamical method.

\subsubsection{Acknowledgments}
The present study was carried out within the project ``Reducing complexity in
algebra, logic, combinatorics -- REDCOM'' belonging to the program ``Ricerca
Scientifica di Eccellenza 2018'' of the Fondazione Cariverona and GNSAGA of the INdAM.%
\footnote{The opinions expressed in this paper are solely those of the
authors.} Important steps towards this paper were made
during the Dagstuhl Seminar 21472 ``Geometric Logic, Constructivisation, and
Automated Theorem Proving'' in November 2021. This paper would not have come to
existence without the authors' numerous discussions with Daniel Wessel, and
greatly benefited from astute comments of Karim Becher, Nicolas Daans, Kathrin Gimmi, Matthias Hutzler,
Lukas Stoll and the three anonymous reviewers.

\bibliographystyle{splncs04}
\bibliography{main}

\begin{thebibliography}{10}
\providecommand{\url}[1]{\texttt{#1}}
\providecommand{\urlprefix}{URL }
\providecommand{\doi}[1]{https://doi.org/#1}

\bibitem{aczel-rathjen:notes}
Aczel, P., Rathjen, M.: Notes on constructive set theory. Tech. rep., Institut
  Mittag--Leffler (2000), report No. 40

\bibitem{aczel-rathjen:cstdraft}
Aczel, P., Rathjen, M.: Constructive set theory (2010), book draft

\bibitem{banaschewski:krull}
Banaschewski, B.: {A new proof that `Krull implies Zorn'}. Math. Log. Quart.
  \textbf{40}(4),  478--480 (1994)

\bibitem{banaschweski-vermeulen:radical}
Banaschewski, B., Vermeulen, J.: Polynomials and radical ideals. J. Pure Appl.
  Algebra  \textbf{113}(3),  219--227 (1996)

\bibitem{barr:without-points}
Barr, M.: Toposes without points. J. Pure Appl. Algebra  \textbf{5}(3),
  265--280 (1974)

\bibitem{bauer:c2c}
Bauer, A.: Realizability as the connection between computable and constructive
  mathematics (2005), \url{http:/\!/math.andrej.com/asset/data/c2c.pdf}

\bibitem{bauer:int-mathematics}
Bauer, A.: Intuitionistic mathematics and realizability in the physical world.
  In: Zenil, H. (ed.) A Computable Universe. World Scientific Pub Co (2012)

\bibitem{bauer:five-stages}
Bauer, A.: Five stages of accepting constructive mathematics. Bull. AMS
  \textbf{54},  481--498 (2017)

\bibitem{berardi-valentini:krivine}
Berardi, S., Valentini, S.: Krivine's intuitionistic proof of classical
  completeness (for countable languages). Ann. Pure Appl. Log.
  \textbf{129}(1--3),  93--106 (2004)

\bibitem{blechschmidt:generalized-spaces}
Blechschmidt, I.: Generalized spaces for constructive algebra. In: Mainzer, K.,
  Schuster, P., Schwichtenberg, H. (eds.) Proof and Computation II, pp.
  99--187. World Scientific (2021)

\bibitem{bon:rib}
Bonacina, R., Wessel, D.: Ribenboim's order extension theorem from a
  constructive point of view. Algebra Universalis  \textbf{81}(5) (2020)

\bibitem{sep:constructive-mathematics}
Bridges, D., Palmgren, E.: Constructive mathematics. In: Zalta, E. (ed.) The
  Stanford Encyclopedia of Philosophy. Metaphysics Research Lab (2018)

\bibitem{caramello:preliminaries}
Caramello, O.: Topos-theoretic background (2014)

\bibitem{coquand-lombardi:logical}
Coquand, T., Lombardi, H.: A logical approach to abstract algebra. Math.
  Structures Comput. Sci  \textbf{16}(5),  885--900 (2006)

\bibitem{coquand-lombardi-roy:dynamicalmethod}
Coste, M., Lombardi, H., Roy, M.F.: Dynamical method in algebra: ef{}fective
  nullstellensätze. Ann. Pure Appl. Logic  \textbf{111}(3),  203--256 (2001)

\bibitem{crosilla:predicativity}
Crosilla, L.: Exploring predicativity. In: Mainzer, K., Schuster, P.,
  Schwichtenberg, H. (eds.) Proof and Computation, pp. 83--108. World
  Scientific (2018)

\bibitem{dalen:logic}
van Dalen, D.: Logic and Structure. Universitext, Springer (2004)

\bibitem{duval:about}
Dora, J., Dicrescenzo, C., Duval, D.: About a new method for computing in
  algebraic number fields. In: Europ. Conference on Computer Algebra (2). pp.
  289--290 (1985)

\bibitem{erne:krull}
Erné, M.: {A primrose path from Krull to Zorn}. Comment. Math. Univ. Carolin.
  \textbf{36}(1),  123--126 (1995)

\bibitem{friedman:double-negation-translation}
Friedman, H.: The consistency of classical set theory relative to a set theory
  with intuitionistic logic. J. Symbolic Logic  \textbf{38},  315--319 (1973)

\bibitem{friedman:trick}
Friedman, H.: Classical and intuitionistically provably recursive functions.
  In: Müller, G., Scott, D. (eds.) Higher Set Theory, LNM, vol.~669, pp.
  21--27. Springer (1978)

\bibitem{goedel:ac-gch}
Gödel, K.: The consistency of the axiom of choice and of the generalized
  continuum-hypothesis. Proc. Natl. Acad. Sci. USA  \textbf{24}(12),  556--557
  (1938)

\bibitem{hamkins:multiverse}
Hamkins, J.: The set-theoretic multiverse. Rev. Symb. Log.  \textbf{5},
  416--449 (2012)

\bibitem{herbelin-ilik:henkin}
Herbelin, H., Ilik, D.: {An analysis of the constructive content of Henkin's
  proof of Gödel's completeness theorem (draft)} (2016)

\bibitem{hodges:krull}
Hodges, W.: {Krull implies Zorn}. J. Lond. Math. Soc.  \textbf{19}(2),
  285--287 (1979)

\bibitem{howard-rubin:ac}
Howard, P., Rubin, J.: Consequences of the Axiom of Choice. Math. Surveys
  Monogr., AMS (1998)

\bibitem{hyland:effective-topos}
Hyland, M.: The ef{}fective topos. In: Troelstra, A.S., van Dalen, D. (eds.)
  The L. E. J. Brouwer Centenary Symposium. pp. 165--216. North-Holland (1982)

\bibitem{ishihara-khoussainov-nerode:decidable-kripke-models}
Ishihara, H., Khoussainov, B., Nerode, A.: {Decidable Kripke models of
  intuitionistic theories}. Ann. Pure Appl. Logic  \textbf{93},  115--123
  (1998)

\bibitem{johansson:minimal}
Johansson, I.: {Der Minimalkalkül, ein reduzierter intuitionistischer
  Formalismus}. Compos. Math.  \textbf{4},  119--136 (1937)

\bibitem{johnstone:point}
Johnstone, P.T.: The point of pointless topology. Bull. AMS  \textbf{8}(1),
  41--53 (1983)

\bibitem{joyal-tierney:grothendieck}
Joyal, A., Tierney, M.: An extension of the Galois theory of Grothendieck, Mem.
  AMS, vol.~309. AMS (1984)

\bibitem{krivine:completeness}
Krivine, J.L.: Une preuve formelle et intuitionniste du théorème de
  complétude de la logique classique. Bull. Symb. Logic  \textbf{2},  405--421
  (1996)

\bibitem{krull:ohne}
Krull, W.: {Idealtheorie in Ringen ohne Endlichkeitsbedingung}. Math. Ann.
  \textbf{101},  729--744 (1929)

\bibitem{lombardi-quitte:constructive-algebra}
Lombardi, H., Quitté, C.: Commutative Algebra: Constructive Methods. Springer
  (2015)

\bibitem{maietti:modular}
Maietti, M.: Modular correspondence between dependent type theories and
  categories including pretopoi and topoi. Math. Struct. Comput. Sci.
  \textbf{15}(6),  1089--1149 (2005)

\bibitem{maietti:au}
Maietti, M.: Joyal's arithmetic universes as list-arithmetic pretoposes. Theory
  Appl. Categ.  \textbf{23}(3),  39--83 (2010)

\bibitem{mines-richman-ruitenburg:constructive-algebra}
Mines, R., Richman, F., Ruitenburg, W.: A Course in Constructive Algebra.
  Universitext, Springer (1988)

\bibitem{murthy:classical-proofs}
Murthy, C.: Classical proofs as programs: How, what and why. In: Myers, J.,
  O'Donnell, M. (eds.) Constructivity in Comp. Science. pp. 71--88. Springer
  (1992)

\bibitem{oosten:realizability}
van Oosten, J.: Realizability: An Introduction to its Categorical Side, Stud.
  Logic Found. Math., vol.~152. Elsevier (2008)

\bibitem{persson:constructive-spectrum}
Persson, H.: An application of the constructive spectrum of a ring. In: Type
  Theory and the Integrated Logic of Programs. Chalmers Univ., Univ. of
  Gothenburg (1999)

\bibitem{phoa:effective}
Phoa, W.: An introduction to fibrations, topos theory, the ef{}fective topos
  and modest sets. Tech. rep., University of Edinburgh (1992)

\bibitem{powell-schuster-wiesnet:krull}
Powell, T., Schuster, P., Wiesnet, F.: {A universal algorithm for Krull's
  theorem}. Information and Computation  (2021)

\bibitem{pow:occ}
Powell, T.: {On the computational content of Zorn's lemma}. In: LICS '20. pp.
  768--781. ACM (2020)

\bibitem{rathjen:genind}
Rathjen, M.: Generalized inductive definitions in constructive set theory. In:
  Crosilla, L., Schuster, P. (eds.) From Sets and Types to Topology and
  Analysis: Towards Practicable Foundations for Constructive Mathematics,
  Oxford Logic Guides, vol.~48, chap.~16. Clarendon Press (2005)

\bibitem{richman:trivial-rings}
Richman, F.: Nontrivial uses of trivial rings. Proc. AMS  \textbf{103},
  1012--1014 (1988)

\bibitem{rin:ukl}
Rinaldi, D., Schuster, P.: {A universal Krull--Lindenbaum theorem}. J. Pure
  Appl. Algebra  \textbf{220},  3207--3232 (2016)

\bibitem{rin:edde}
Rinaldi, D., Schuster, P., Wessel, D.: Eliminating disjunctions by disjunction
  elimination. Bull. Symb. Logic  \textbf{23}(2),  181--200 (2017)

\bibitem{rin:edde:full}
Rinaldi, D., Schuster, P., Wessel, D.: Eliminating disjunctions by disjunction
  elimination. Indag. Math. (N.S.)  \textbf{29}(1),  226--259 (2018)

\bibitem{schoenfield:predicativity}
Schoenfield, J.: The problem of predicativity. In: Bar-Hillel, Y., Poznanski,
  E., Rabin, M., Robinson, A. (eds.) Essays on the Foundations of Mathematics,
  pp. 132--139. Magnes (1961)

\bibitem{schuster:induction}
Schuster, P.: Induction in algebra: a first case study. In: LICS '12. pp.
  581--585. ACM (2012)

\bibitem{schuster:inductionjournal}
Schuster, P.: Induction in algebra: a first case study. Log. Methods Comput.
  Sci.  \textbf{9}(3:20),  1--19 (2013)

\bibitem{schuster-wessel:indeterminacy}
Schuster, P., Wessel, D.: Resolving finite indeterminacy: a definitive
  constructive universal prime ideal theorem. In: LICS '20. pp. 820--830. ACM
  (2020)

\bibitem{schuster-wessel:cie2020}
Schuster, P., Wessel, D.: {The computational significance of Hausdorff’s
  maximal chain principle}. In: CiE '20. Lecture Notes in Comput. Sci. (2020)

\bibitem{schuster-wessel:krull}
Schuster, P., Wessel, D.: {Syntax for semantics: {K}rull's maximal ideal
  theorem}. In: Heinzmann, G., Wolters, G. (eds.) Paul Lorenzen: Mathematician
  and Logician, Log. Epistemol. Unity Sci., vol.~51, pp. 77--102. Springer
  (2021)

\bibitem{schuster-wessel:jacincpred}
Schuster, P., Wessel, D.: {The Jacobson radical for an inconsistency
  predicate}. Computability  (2022), forthcoming

\bibitem{swy:dynprime}
Schuster, P., Wessel, D., Yengui, I.: {Dynamic evaluation of integrity and the
  computational content of Krull's lemma}. J. Pure Appl. Algebra
  \textbf{226}(1) (2022)

\bibitem{schuster-wessel:ext}
Schuster, P., Wessel, D.: A general extension theorem for directed-complete
  partial orders. Rep. Math. Logic  \textbf{53},  79--96 (2018)

\bibitem{scott:prime-ideals}
Scott, D.: {Prime ideal theorems for rings, lattices and Boolean algebras}.
  Bull. AMS  \textbf{60}, ~390 (1954)

\bibitem{shulman:categorical-logic}
Shulman, M.: {Categorical logic from a categorical point of view (draft for
  AARMS Summer School 2016)} (2016),
  \url{https:/\!/mikeshulman.github.io/catlog/catlog.pdf}

\bibitem{simpson:subsystems}
Simpson, S.: Subsystems of Second Order Arithmetic. Springer (1999)

\bibitem{suslin:structure}
Suslin, A.: On the structure of the special linear group over polynomial rings.
  Izv. Akad. Nauk SSSR Ser. Mat.  \textbf{41},  235--252 (1977)

\bibitem{tarski:fundamental}
Tarski, A.: {Fundamentale Begriffe der Methodologie der deduktiven
  Wissenschaften.~I}. Monatsh. Math. Phys.  \textbf{37},  361--404 (1930)

\bibitem{vickers:locales-toposes}
Vickers, S.: Locales and toposes as spaces. In: Aiello, M., Pratt-Hartmann, I.,
  van Benthem, J. (eds.) Handbook of Spatial Logics, pp. 429--496. Springer
  (2007)

\bibitem{vickers:continuity}
Vickers, S.: Continuity and geometric logic. J. Appl. Log.  \textbf{12}(1),
  14--27 (2014)

\bibitem{vickers:sketches}
Vickers, S.: Sketches for arithmetic universes. J. Log. Anal.
  \textbf{11}(FT4),  1--56 (2016)

\bibitem{wessel:ordering}
Wessel, D.: Ordering groups constructively. Comm. Alg.  \textbf{47}(12),
  4853--4873 (2019)

\bibitem{wessel:conred}
Wessel, D.: A note on connected reduced rings. J. Comm. Alg.  \textbf{13}(4),
  583--588 (2021)

\bibitem{yengui:maximal}
Yengui, I.: Making the use of maximal ideals constructive. Theoret. Comput.
  Sci.  \textbf{392},  174--178 (2008)

\bibitem{yengui:constructive}
Yengui, I.: Constructive Commutative Algebra. Projective Modules over
  Polynomial Rings and Dynamical Gröbner Bases, LNM, vol.~2138. Springer
  (2015)

\end{thebibliography}

\end{document}